\newtheorem{thm}[subsection]{Theorem}
\newtheorem{prop}[subsection]{Proposition}
\newtheorem{defn}[subsection]{Definition}
\newtheorem{lem}[subsection]{Lemma}
\newtheorem{cor}[subsection]{Corollary}
\newtheorem{conj}[subsection]{Conjecture}
\newtheorem{eg}[subsection]{Example}
\theoremstyle{remark}
\newcommand{\subsubsubsection}{\@startsection{paragraph}{4}{\z@}%
 {1.0\Cvs \@plus.5\Cdp \@minus.2\Cdp}%
 {.1\Cvs \@plus.3\Cdp}%
 {\reset@font\sffamily\normalsize}
 }
\newcommand{\bC}{\mathbb{C}}
\newcommand{\bF}{\mathbb{F}}
\newcommand{\bQ}{\mathbb{Q}}
\newcommand{\bR}{\mathbb{R}}
\newcommand{\bZ}{\mathbb{Z}}
\newcommand{\cE}{\mathcal{E}}
\newcommand{\cO}{\mathcal{O}}
\newcommand{\fm}{\mathfrak{m}}
\newcommand{\fI}{\mathfrak{I}}
\newcommand{\sfE}{\mathsf{E}}
\newcommand{\ra}{\rightarrow}
\newcommand{\xra}{\xrightarrow}
\newcommand{\hra}{\hookrightarrow}
\newcommand{\wt}{\widetilde}
\newcommand{\eps}{\epsilon}
\newcommand{\pr}{^{\prime}}
\newcommand{\ce}{\colonequals}
\newcommand{\ov}{\overline}
\renewcommand{\b}{\textbf}
\newcommand{\surjects}{\twoheadrightarrow}
\newcommand{\tensor}{\otimes} 		% binary tensor product
\newcommand{\isomto}{\overset{\sim}{\longrightarrow}}
\newcommand{\st}{{\mathrm{st}}}		% semistable (mainly used in subscripts)
\newcommand{\Norm}{\mathrm{Norm}}		% Norm
\renewcommand{\i}{^{-1}}
\renewcommand{\th}{^{\mathrm{th}}}
\providecommand{\abs}[1]{\left\lvert#1\right\rvert}
\providecommand{\p}[1]{\left(#1\right)}
\providecommand{\f}[2]{\frac{#1}{#2}}
\DeclareMathOperator{\Ker}{Ker}			% Kernel
\DeclareMathOperator{\Hom}{Hom}			% Set of arrows between two object
\DeclareMathOperator{\Char}{char}		% Characteristic of a field
\DeclareMathOperator{\Gal}{Gal}	% Galois group
\DeclareMathOperator{\ab}{ab}		% abelianization
\DeclareMathOperator{\Ind}{Ind}		% Induced representation
\DeclareMathOperator{\GL}{GL}		% The general linear group
\newcommand{\ba}{\begin{aligned}}
\newcommand{\ea}{\end{aligned}}
\newcommand{\be}{\begin{equation}}
\newcommand{\ee}{\end{equation}}
\newcommand{\pf}{\begin{proof}}
\newcommand{\bpf}{\begin{proof}}
\newcommand{\epf}{\end{proof}}
\newcommand{\bthm}{\begin{thm}}
\newcommand{\ethm}{\end{thm}}
\newcommand{\bprop}{\begin{prop}}
\newcommand{\eprop}{\end{prop}}
\newcommand{\bcor}{\begin{cor}}
\newcommand{\ecor}{\end{cor}}
\newcommand{\brem}{\begin{rem}}
\newcommand{\erem}{\end{rem}}
\newcommand{\brems}{\begin{rems} \hfill \begin{enumerate}[label=\b{\thesubsection.},ref=\thesubsection]}
\newcommand{\erems}{\end{enumerate} \end{rems}}
\newcommand{\blem}{\begin{lemma}}
\newcommand{\elem}{\end{lemma}}
\newcommand{\bconj}{\begin{conj}}
\newcommand{\econj}{\end{conj}}
\newcommand{\bprob}{\begin{Problem}}
\newcommand{\eprob}{\end{Problem}}
\newcommand{\bq}{\begin{q}}
\newcommand{\eq}{\end{q}}
\newcommand{\benum}{\begin{enumerate}[label={(\alph*)}]}
\newcommand{\benuma}{\begin{enumerate}[label={(\arabic*)}]}
\newcommand{\benumr}{\begin{enumerate}[label={(\roman*)}]}
\newcommand{\eenum}{\end{enumerate}}
\newcommand{\bc}{\begin{comment}}
\newcommand{\ec}{\end{comment}}
\newcommand{\beg}{\begin{eg}}
\newcommand{\eeg}{\end{eg}}
\newcommand{\bcl}{\begin{claim}}
\newcommand{\ecl}{\end{claim}}
\newcommand{\lab}{\label}
\newcommand{\q}{\quad}
\newcommand{\qq}{\quad\quad}
\newcommand{\qqq}{\quad\quad\quad}
\newcommand{\qqqq}{\quad\quad\quad\quad}
\DeclareMathOperator{\Spec}{Spec}
\newcommand{\ol}{\overline}
\theoremstyle{plain}
\Crefname{thm}{Theorem}{Theorems}
\Crefname{rethm}{Theorem}{Theorem}
\Crefname{prop}{Proposition}{Propositions}
\Crefname{Q}{Question}{Questions}
\Crefname{eg}{Example}{Examples}
\newtheorem{Problem}[subsection]{Problem}
\Crefname{Problem}{Problem}{Problems}
\Crefname{conj}{Conjecture}{Conjectures}
\Crefname{cor}{Corollary}{Corollaries}
\newtheorem{lemma}[subsection]{Lemma}
\Crefname{subprop}{Proposition}{Propositions}
\Crefname{subcor}{Corollary}{Corollaries}
\Crefname{sublem}{Lemma}{Lemmas}
\theoremstyle{remark}
\newtheorem{claim}[equation]{Claim}
\Crefname{claim}{Claim}{Claims}
\Crefname{subrem}{Remark}{Remarks}
\theoremstyle{definition}
\newtheorem{rem}[subsection]{Remark}
\Crefname{rem}{Remark}{Remarks}
\newtheorem*{rems}{Remarks}
\newtheoremstyle{subsection-tweak}
   {11pt}
   {3pt}%
   {}
   {}%
   {\bfseries}
   {}%
   {.5em}
   {\thmnumber{\@{#1}{}\@{#2}.}%
    \thmnote{~{\bfseries#3.}}}
\Crefname{innercustomconj}{Conjecture}{Conjecture}
\theoremstyle{subsection-tweak}
\newtheorem{pp}[subsection]{}
\newcommand{\bpp}{\begin{pp}}
\newcommand{\epp}{\end{pp}}
\numberwithin{equation}{subsection}
\begin{document}

\title
{The remaining cases of the Kramer--Tunnell conjecture}
\author{K\k{e}stutis \v{C}esnavi\v{c}ius and Naoki Imai} 

\date{\today}
\subjclass[2010]{Primary 11G07.}
\keywords{Elliptic curve, local field, root number, Weierstrass equation.}

\address{Department of Mathematics, University of California, Berkeley, CA 94720-3840, USA}
\email{kestutis@berkeley.edu}
%\urladdr{http://math.berkeley.edu/~kestutis/}

\address{Graduate School of Mathematical Sciences, 
The University of Tokyo, 3-8-1 Komaba, Meguro-ku, 
Tokyo, 153-8914, Japan}
\email{naoki@ms.u-tokyo.ac.jp}
%\urladdr{http://www.ms.u-tokyo.ac.jp/~naoki/}

\begin{abstract} For an elliptic curve $E$ over a local field $K$ and a separable quadratic extension of $K$, motivated by connections to the Birch and Swinnerton-Dyer conjecture, Kramer and Tunnell have conjectured a formula for computing the local root number of the base change of $E$ to the quadratic extension in terms of a certain norm index. The formula is known in all cases except some when $K$ is of characteristic $2$, and we complete its proof by reducing the positive characteristic case to characteristic $0$. For this reduction, we exploit the principle that local fields of characteristic $p$ can be approximated by finite extensions of $\bQ_p$---we find an elliptic curve $E'$ defined over a $p$-adic field such that all the terms in the Kramer--Tunnell formula for $E'$ are equal to those for $E$. \end{abstract}

\maketitle

\section{Introduction}

\bpp[The Kramer--Tunnell conjecture] \lab{ii}
Let $E$ be an elliptic curve over a local field $K$, and let $K_\chi/K$ be the separable quadratic extension cut out by a continuous character $\chi\colon W_K \surjects \{\pm 1\} \subset \bC^\times$ of the Weil group $W_K$ of $K$. In \cite{KT82}, Kramer and Tunnell conjectured the formula\footnote{Kramer and Tunnell phrase \eqref{KTconj} differently, but the two formulations are equivalent due to the well-known formula $w(E_{K_\chi}) = w(E)w(E_\chi)\chi(-1)$, where $E_\chi$ is the quadratic twist of $E$ by $\chi$ (in the archimedean case, all the terms in this formula are $-1$; in the nonarchimedean case, this formula may be proved the same way as \cite{Ces15}*{Prop.~3.11}). We prefer the formulation \eqref{KTconj} because it emphasizes that the right hand side is a formula for $w(E_{K_\chi})$.}
\be\lab{KTconj} \tag{$\bigstar$}
 w(E_{K_\chi}) \overset{?}{=} \chi(\Delta) \cdot (-1)^{\dim_{\bF_2} 
 (E(K)/\Norm_{K_{\chi}/K} E (K_{\chi}))}
\ee
for computing the local root number $w(E_{K_\chi})$ of the base change of $E$ to $K_{\chi}$; here $\Delta \in K^\times$ is the discriminant of any Weierstrass equation for $E$ and $\chi(\Delta)$ is interpreted via the reciprocity isomorphism $K^\times \isomto W_K^{\ab}$. Even though the conjecture is purely local, its origins are global: it was inspired by the $2$-parity conjecture, which itself is a special case of the rank part of the Birch and Swinnerton-Dyer conjecture combined with the predicted finiteness of Shafarevich--Tate groups. In fact, \eqref{KTconj} is the backbone of one of the most general unconditional results on the $2$-parity conjecture: for an elliptic curve over a number field, the $2$-parity conjecture holds after base change to any quadratic extension---see \cite{DD11}*{Cor.~4.8} for this result.
\epp

\bpp[Known cases]
The Kramer--Tunnell conjecture is known in the vast majority of cases. Kramer and Tunnell settled it in \cite{KT82} except for (most of) the cases when all of the following hold: $K$ is nonarchimedean of residue characteristic $2$, the reduction of $E$ is additive potentially good, and $\chi$ is ramified. In \cite{DD11}*{Thm.~1.5}, T.~and V.~Dokchitser completed the proof for $K$ of characteristic $0$. Thus, in the remaining cases, $\Char K = 2$ and $E$ has additive potentially good reduction. Our main goal is to address these remaining cases, and hence to complete the proof of
\epp

\bthm[\Cref{main-thm}] \lab{main-thm-ann}
The Kramer--Tunnell conjecture \eqref{KTconj} holds.
\ethm

\bpp[The method of proof: deforming to characteristic $0$]
The basic idea of the proof is to exploit the principle explained in \cite{Del84} that a local field $K$ of positive characteristic $p$ is, in some sense, a limit of more and more ramified finite extensions of $\bQ_p$. More precisely, for every $e \in \bZ_{\ge 1}$, the quotient $\cO_K/\fm_K^e$ is isomorphic to $\cO_{K'}/\fm_{K'}^e$ for some finite extension $K'$ of $\bQ_p$---for instance, 
\[
\bF_{p^n}[[t]]/(t^{e}) \simeq (W(\bF_{p^n})[T]/(T^{e} - p))/(T^{e}),
\]
where $W(\bF_{p^n})$ is the ring of Witt vectors---while, on the other hand, $\cO_K/\fm_K^{e}$ already controls a significant part of the arithmetic of $K$, for instance, it controls the category of finite separable extensions of $K$ whose Galois closures have trivial $(e - 1)^\st$ ramification groups in the upper numbering (for further details regarding such control, see \S\ref{Deeq}). In fact, in \S\ref{appell} we show that $\cO_K/\fm_K^{e}$ controls so much that if we choose an integral Weierstrass equation for $E$, choose a large enough $e$ (that depends on the equation), reduce the equation modulo $\fm_K^e$, and then lift back to an integral Weierstrass equation $E'$ over $K'$, then many invariants of $E'$, such as the reduction type or the $\ell$-adic Tate module, match with the corresponding invariants of $E$. With this at hand, in \S\ref{redKT} we deduce \eqref{KTconj} from its known characteristic $0$ case: we show how to choose a $K'$, an $E'$, and a character $\chi'\colon W_{K'} \surjects \{\pm 1\}$ in such a way that the terms appearing in \eqref{KTconj} for $E$ and $\chi$ match with the ones for $E'$ and $\chi'$.
\epp

\bpp[Reliance on global arguments]
The Kramer--Tunnell formula is purely local, but global~input is crucial for its current proof. This input comes in through the proof of the case when $K$ is a finite extension of $\bQ_2$: in \cite{DD11}*{Thm.~4.7}, T.~and V.~Dokchitser reduce this case of \eqref{KTconj} to the $2$-parity conjecture for certain elliptic curves over totally real number fields, and then prove the latter by combining potential modularity of elliptic curves over totally real fields, the Friedberg--Hoffstein theorem on central zeros of $L$-functions of quadratic twists of self-contragredient cuspidal automorphic representations of $\GL(2)$, and the extensions due to Shouwu Zhang of the results of Gross and Zagier and of Kolyvagin. Is there a purely local proof of the Kramer--Tunnell formula?

One purely local line of attack in the most difficult case of residue characteristic $2$ and additive potentially good reduction is to make use of the formulas for the local root number, which are derived in this situation in \cite{DD08b} and in \cite{Ima15}. However, it seems difficult to isolate the norm index term of \eqref{KTconj} from these formulas. 
\epp

\bpp[Notation and conventions]
If $K$ is a nonarchimedean local field, then 
\begin{itemize}
\item 
$\cO_K$ denotes the ring of integers of $K$, 
\item
$\fm_K$ denotes the maximal ideal of $\cO_K$, 
\item
$v_K\colon K \surjects \bZ \cup \{ \infty \}$ denotes the discrete valuation of $K$, 
\item
$v_K (\fI) \ce \min \{ v_K (a) \mid a \in \fI \}$ for an ideal $\fI \subset \cO_K$, 

\item
$W_K$ denotes the Weil group formed with respect to an implicit choice of a separable closure~$K^s$,

\item
$I_K$ denotes the inertia subgroup of $W_K$,

\item
$I_K^u$ for $u \in \bR_{\ge 0}$ denotes the $u\th$ ramification subgroup of $I_K$ in the upper numbering,

\item
$U_K^0 \ce \cO_K^{\times}$ and 
$U_K^n \ce 1+\fm_K^n$ for $n \in \bZ_{\geq 1}$.
\end{itemize}
We follow the normalization of the reciprocity homomorphism of local class field theory used in \cite{Del84}, i.e., we require that uniformizers are mapped to \emph{geometric} Frobenii.  
Whenever needed, e.g.,~to obtain Weil group inclusions, we implicitly assume that the choices of separable closures are made compatibly. For an elliptic curve $E$, we denote by $V_{\ell}(E)$ its Tate module with $\bQ_\ell$ coefficients. Whenever dealing with quotients, we denote the residue class of an element $a$ by $\ov{a}$.
\epp

\subsection*{Acknowledgements}
We thank the referee for helpful comments and suggestions. While working on this paper, K.\v{C}.~was supported by the Miller Institute for Basic Research in Science at the University of California Berkeley, and N.I.~was supported by a JSPS Postdoctoral Fellowship for Research Abroad.

%---------------------------------------------------------

\section{Deligne's equivalence between extensions of different local fields}\label{Deeq}

In \S\ref{Deeq}, we fix a nonarchimedean local field $K$ and we recall from \cite{Del84} those aspects of approximation of $K$ by another nonarchimedean local field $K'$ that will be needed in the arguments of \S\S\ref{appell}--\ref{redKT}.

\bpp[The category $\sfE_e(K)$] \lab{cat-E}
Fix an $e \in \bZ_{\ge 1}$ and consider those finite separable extensions of $K$ whose Galois closure has a trivial $e\th$ ramification group in the upper numbering. Form the category $\sfE_e(K)$ that has such extensions of $K$ as objects and $K$-algebra homomorphisms as morphisms. 
\epp

\bpp[The triple associated to $K$ and $e$] \lab{trip}
The triple in question is
\[
(\cO_K/\fm_K^e, \fm_K/\fm_K^{e + 1}, \eps_K)
\]
and consists of the Artinian local ring $\cO_K/\fm_K^e$, its free rank $1$ module $\fm_K/\fm_K^{e + 1}$, and the $(\cO_K/\fm_K^e)$-module homomorphism 
\[
\eps_K\colon \fm_K /\fm_K^{e+1} \to \cO_K/ \fm_K^e
\] 
induced by the inclusion $\fm_K \subset \cO_K$. According to one of the main results of \cite{Del84}, this triple determines the category $\sfE_e(K)$; we will use this result in the form of \Cref{Deligne} below.
\epp

\bpp[Isomorphisms of triples] \lab{iso-trip}
If $K'$ is another nonarchimedean local field, then an isomorphism
\be\lab{t-iso} \tag{$\ddagger$}
(\cO_K/\fm_K^e, \fm_K/\fm_K^{e + 1}, \eps_K) \isomto (\cO_{K'}/\fm_{K'}^e, \fm_{K'}/\fm_{K'}^{e + 1}, \eps_{K'})
\ee
is a datum of a ring isomorphism 
\[
\phi\colon \cO_K/\fm_K^e \isomto \cO_{K'}/\fm_{K'}^e
\]
and a $\phi$-semilinear isomorphism 
%\naoki{Is this common expression? I think "a $\phi$-semilinear isomorphism" is more common. }
%\kestutis{OK, I agree.}
\[
\eta\colon \fm_K /\fm_K^{e+1} \isomto  \fm_{K'}/\fm_{K'}^{e+1}
\]
for which the diagram
\[
 \xymatrix{
 \fm_K /\fm_K^{e+1} \ar@{->}[d]_{\wr}^{\eta} 
 \ar@{->}[r]^{\epsilon_K} &  \cO_K/ \fm_K^e \ar@{->}[d]^{\phi}_{\wr} \\
 \fm_{K'} /\fm_{K'}^{e+1} \ar@{->}[r]^{\epsilon_{K'}}  & 
 \cO_{K'}/ \fm_{K'}^e 
 }
\]
commutes. Any $\phi$ extends to an isomorphism \eqref{t-iso}, albeit noncanonically: if $\pi_K \in \cO_K$ is a uniformizer, then giving an $\eta$ that is compatible with $\phi$ amounts to giving a lift of $\phi(\ov{\pi_K})$ to $\fm_{K'}/\fm_{K'}^{e + 1}$. In contrast, any ring isomorphism
\[
\cO_K/\fm_K^{e + 1} \isomto \cO_{K'}/\fm_{K'}^{e + 1}
\]
 induces a canonical isomorphism \eqref{t-iso}.
\epp

\bthm[Deligne] \lab{Deligne}
An isomorphism of triples \eqref{t-iso} gives rise to an equivalence of categories 
\[
 \Xi \colon \sfE_e (K) \xrightarrow{\sim} \sfE_e (K')
\]
and an isomorphism
\[
W_K / I_K^e \simeq  W_{K'} / I_{K'}^e.
\]
 The latter is canonical up to an inner automorphism, preserves the images of the inertia subgroups, and maps Frobenius elements to Frobenius elements.
\ethm

%\kestutis{I've put in an additional conclusion about Frobenius elements because I think that it is used implicitly in the proof of \ref{existe} \ref{loop-5}.}

\bpf
For the existence of $\Xi$, see \cite{Del84}*{3.4}. The construction there involves taking an inverse of an explicit functor that gives an intermediate equivalence of categories, so $\Xi$ is not canonical. 
However, there is a natural isomorphism between any two inverses of a functor that is an equivalence, and hence also a natural isomorphism between any two $\Xi$ that result from loc.~cit. Thus, the choice of $\Xi$ does not affect the uniqueness up to an inner automorphism of the isomorphism 
\be \lab{Del-Gal}
\Gal(K^{\mathrm{s}}/K)/I_K^e \simeq \Gal(K'^{\mathrm{s}}/K')/I_{K'}^e
\ee
exhibited in \cite{Del84}*{3.5}. 
By \cite{Del84}*{2.1.1 (ii)}, $\Xi$ preserves unramified extensions; by construction, it also preserves their degrees. Thus, by its construction in \cite{Del84}*{3.5}, \eqref{Del-Gal} preserves the images of the inertia subgroups, maps Frobenii to Frobenii, and hence induces a sought isomorphism
\[
W_K / I_K^e \simeq W_{K'} / I_{K'}^e. \qedhere
\]
\epf

\bpp[An explicit description of $\Xi$]\label{descXi}
Fix an $F \in \sfE_e (K)$ and set $F' \ce \Xi(F)$. To describe $F'$ explicitly, start with 
 the maximal unramified subextension $M/K$ of $F/K$, and set $M' \ce \Xi(M)$. By \cite{Del84}*{2.1.1 (ii)}, $M'/K'$ is the maximal unramified subextension of $F'/K'$ and $[M' : K'] = [M : K]$. Since $\cO_M$ and $\cO_{M'}$ are finite \'{e}tale over $\cO_K$ and $\cO_{K'}$ respectively, $\eta$ and $\phi$ induce the isomorphism
\[
\textstyle  {\fm_{M}}/{\fm_{M}^{e+1}} \cong ({\fm_{K}}/{\fm_{K}^{e+1}}) \tensor_{{\cO_K}/{\fm_K^e}} {\cO_M}/{\fm_M^e} 
 \xrightarrow{\sim} ({\fm_{K'}}/{\fm_{K'}^{e+1}}) \tensor_{{\cO_{K'}}/{\fm_{K'}^e}} {\cO_{M'}}/{\fm_{M'}^e} \cong {\fm_{M'}}/{\fm_{M'}^{e+1}}
\]
which we denote by $\eta_M$. Let
\[
\textstyle f(x) =\sum_{i=0} ^{[F:M]} m_i x^i \in M [x]
\]
 be a monic integral Eisenstein polynomial defining $F/M$.
For $i < [F:M]$, we lift 
\[
\eta_M (\ol{m_i}) \q \text{to} \q m_i' \in \fm_{M'} \qq \text{in such a way that}\q  m_1' \neq 0 \q \text{if} \q \Char K' \neq 0,
\]
 so that the polynomial
\[
\textstyle g(x) \ce \sum_{i=0}^{[F:M]} m_i' x^i \in M'[x] \qq  \text{with} \q m'_{[F: M]} \ce 1
\]
is separable. Then $g(x)$ is monic, separable, and Eisenstein, and the proof of \cite{Del84}*{1.4.4} supplies the sought description of $F'$:
\[
\text{The polynomial $g(x)$ has a root in $F'$, so $F' \simeq M' [x]/ (g(x))$ as $K'$-algebras. }
\]
%Then we have $\Xi (F) \cong F'$ 
%as $K'$-algebras 
%by the construction of $\Xi$ (cf.~\cite{Del84}*{Lemme 1.4.4}).
%\naoki{We may need this description to explain relation between $E_{\chi}$ and $E'_{\chi'}$. This description will give another proof of Proposition \ref{disccomp}.}

\epp

\bpp[Induced isomorphisms of triples over extensions] \lab{phie}
For an $F \in \sfE_e (K)$, set $F' \ce \Xi(F)$. By \cite{Del84}*{3.4.1 and 3.4.2}, $(\phi, \eta)$ induces a compatible canonical isomorphism
\begin{equation}\label{phiext}
 (\phi_F, \eta_F) \colon  (\cO_{F} / \fm_{F}^e, \fm_F/\fm_F^{e + 1}, \eps_F) 
 \xrightarrow{\sim} 
 (\cO_{F'} / \fm_{F'}^e, \fm_{F'}/\fm_{F'}^{e + 1}, \eps_{F'})
 \end{equation}
between the triples associated to $F$ and $F'$.
\epp

\begin{prop}\label{disccomp}
Fix an $F \in \sfE_e (K)$, set $F' \ce \Xi(F)$, and let $\mathfrak{d}_{F/K} \subset \cO_K$ and $\mathfrak{d}_{F'/K'} \subset \cO_{K'}$ denote the discriminant ideals of the separable extensions $F/K$ and $F'/K'$. Then 
\[
v_{K} (\mathfrak{d}_{F/K}) =v_{K'} (\mathfrak{d}_{F'/K'}).
\] 
\end{prop}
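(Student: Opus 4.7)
The plan is to express both sides of the claimed equality as combinatorial invariants of the ramification filtration on the Galois closure of $F/K$, and then to invoke \Cref{Deligne} in a ramification-preserving form.

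First I would pass to the Galois closure. Let $L/K$ be the Galois closure of $F/K$ inside $K^s$; by the definition of $\sfE_e(K)$ we have $L\in\sfE_e(K)$, and setting $L'\ce\Xi(L)$, the fact that $L/K$ is Galois and minimal over $F$ is detected categorically (via automorphism groups, degrees, and inclusions), so $L'/K'$ is the Galois closure of $F'/K'$. \Cref{Deligne} and the functoriality of $\Xi$ then yield an isomorphism
\[
\alpha\colon G\ce\Gal(L/K)\isomto G'\ce\Gal(L'/K')
\]
that, after compatible choices of separable closures, carries $H\ce\Gal(L/F)$ onto $H'\ce\Gal(L'/F')$.

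The crucial input is that $\alpha$ preserves the ramification filtration. More is true than the preservation of inertia stated in \Cref{Deligne}: the isomorphism $W_K/I_K^e\simeq W_{K'}/I_{K'}^e$ of that theorem maps $I_K^u$ to $I_{K'}^u$ for every $0\le u\le e$, the upper ramification being encoded by the triples of \Cref{trip} through the construction of \cite{Del84}*{\S 3}. Hence $\alpha(G^u)=G'^u$ for $u\le e$, and Herbrand's theorem applied on both sides yields $\alpha(G_i)=G'_i$ for all $i\ge 0$, whence also $\alpha(H\cap G_i)=H'\cap G'_i$.

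To conclude, I would combine the tower relation $\mathfrak{D}_{L/K}=\mathfrak{D}_{L/F}\cdot(\mathfrak{D}_{F/K}\cO_L)$ with Hilbert's formulas $v_L(\mathfrak{D}_{L/K})=\sum_{i\ge 0}(|G_i|-1)$ and $v_L(\mathfrak{D}_{L/F})=\sum_{i\ge 0}(|H\cap G_i|-1)$, together with $\mathfrak{d}_{F/K}=N_{F/K}(\mathfrak{D}_{F/K})$ and $v_K\circ N_{F/K}=f(F/K)\cdot v_F$, to obtain
\[
v_K(\mathfrak{d}_{F/K}) \;=\; \frac{|G|}{|G_0|\,|H|}\sum_{i\ge 0}\bigl(|G_i|-|H\cap G_i|\bigr),
\]
together with the analogous identity for $F'/K'$. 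Each quantity on the right-hand side is a group-theoretic invariant of the triple $(G,H,\{G_i\})$ and is therefore preserved by $\alpha$, giving the desired equality. The main obstacle is justifying the stated refinement that Deligne's isomorphism preserves the full upper ramification filtration, since the version of \Cref{Deligne} recorded in the excerpt only explicitly mentions inertia; a careful reading of \cite{Del84}*{\S 3} is required here. An alternative route avoiding this point would be to work with the Eisenstein description of $F'$ from \Cref{descXi} and compare $v_M(\mathrm{disc}(f))$ with $v_{M'}(\mathrm{disc}(g))$ directly, but this becomes delicate when $\Char K\neq\Char K'$, since integers $i\in\bZ_{\ge 1}$ appearing as coefficients of $f'$ have different valuations in $K$ and in $K'$.
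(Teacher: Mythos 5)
Your proof takes essentially the same approach as the paper's: pass to the Galois closure, use that Deligne's isomorphism preserves the upper ramification filtration (the paper cites \cite{Del84}*{2.1.1 (iii)} for exactly this, resolving the concern you raise about \Cref{Deligne} only mentioning inertia), transfer to the lower numbering via the Herbrand functions (the paper cites \cite{Del84}*{1.5.3} here), and express $v_K(\mathfrak{d}_{F/K})$ as a group-theoretic invariant of $(G,H,\{G_i\})$ via the tower relation for differents and Hilbert's formula (the paper packages this as a citation to \cite{Ser79}*{III.\S3 Prop.~6 and IV.\S1 Cor.~to~Prop.~4}). Your write-up is simply more explicit about the final reduction to group theory; the structure of the argument is identical.
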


\begin{proof}
Let $\wt{F}$ be a Galois closure of $F$ over $K$, and set $\wt{F}' \ce \Xi (\wt{F})$, so that, by \Cref{Deligne}, $\wt{F}'$ is a Galois closure of $F'$ over $K'$ and
\[
\Gal (\wt{F}/K) \simeq \Gal (\wt{F}'/K').
\]
  By \cite{Del84}*{2.1.1 (iii)}, the latter isomorphism preserves the ramification filtrations in the upper numbering. By \cite{Del84}*{1.5.3}, it also preserves the Herbrand functions:
\[
\varphi_{\wt{F}/K} = \varphi_{\wt{F}'/K'} \qq \text{ and } \qq \psi_{\wt{F}/K} = \psi_{\wt{F}'/K'}.
\]
 Thus, it also preserves the ramification filtrations in the lower numbering. Therefore, the claim follows from 
\cite{Ser79}*{III.\S3 Prop.~6 and IV.\S1 Cor.~to~Prop.~4}. 
\end{proof}

\bpp[Correspondence of representations] \lab{rep-corr}
The inner automorphism ambiguity in \Cref{Deligne} is irrelevant for the study of isomorphism classes of representations. More precisely, by \Cref{Deligne}, an isomorphism of triples \eqref{t-iso} gives a \emph{canonical} bijection between the set of isomorphism classes of finite dimensional smooth complex representations of $W_K / I_K^e$ 
and those of $W_{K'} / I_{K'}^e$. 
Thus, given such representations $V$ and $V'$ of $W_K / I_K^e$ and $W_{K'} / I_{K'}^e$, we say that $V$ and $V'$ \emph{correspond} if the canonical bijection maps the isomorphism class of $V$ to that of $V'$.
\epp

\begin{defn}[\cite{Del84}*{3.7}] \lab{psi-corr}
In the setup of \S\ref{iso-trip}, we say that nontrivial locally constant additive characters
\[
\psi \colon (K, +) \to \bC^{\times} \qq \text{and} \qq \psi' \colon (K', +) \to \bC^{\times}
\]
\emph{correspond} if 
\benuma
\item \lab{psi-corr-1}
For every $n \in \bZ$, we have $\psi|_{\fm_K^{-n}} = 1$ if and only if $\psi'|_{\fm_{K'}^{-n}} = 1$;%, in other words, $n(\psi) = n(\psi')$, where $n(\psi)$ is the largest $n$ with $\psi|_{\fm_K^{-n}} = 1$ and likewise 

%is trivial if and only if  The conductors of $\psi$ and $\psi'$ are equal, 
%which we write $n$. 
\item
Letting $N$ denote the largest $n$ for which the equivalent conditions of \ref{psi-corr-1} hold, we have that the restrictions $\psi |_{\fm_K^{-N-e}}$ and $\psi' |_{\fm_{K'}^{-N-e}}$ induce characters that agree under the isomorphism 
\[
 \xymatrix@C=16pt{
\qq \textstyle \fm_K^{-N-e}/\fm_K^{-N}
 \cong
 \p{{\fm_K}/{\fm_K^{e+1}}}^{\otimes (-N-e)} 
 \ar[rr]_-{\eta^{\otimes (-N-e)}}^-{\sim} && 
\textstyle \p{{\fm_{K'}}/{\fm_{K'}^{e+1}}}^{\otimes (-N-e)} 
 \cong 
 {\fm_{K'}^{-N-e}}/{\fm_{K'}^{-N}}.
 }
\]
\end{enumerate}
\end{defn}

\brem \lab{psipr-exists}
Any nontrivial locally constant additive character $\psi$ has a (nonunique) corresponding $\psi'$. Indeed, every character $\ov{\theta}\colon \fm_{K'}^{-N-e}/\fm_{K'}^{-N} \ra \bC^\times$ is induced by a $\theta\colon (K', +) \ra \bC^\times$ that is trivial on $\fm_{K'}^{-N}$. To see this, fix some $\theta$ that is trivial on $\fm_{K'}^{-N}$ but not on $\fm_{K'}^{-N - 1}$ and note that as $a\in \cO_{K'}$ ranges over coset representatives of $\cO_{K'}/\fm_{K'}^e$ the restrictions of the characters $a\theta\colon x \mapsto \theta(ax)$ to $\fm_{K'}^{-N-e}/\fm_{K'}^{-N}$ are distinct and hence sweep out the set of possible $\ov{\theta}$.
\erem

\begin{prop}\label{epscomp}
Let $V$ and $V'$ be finite dimensional smooth complex representations of $W_K$ and $W_{K'}$ that are trivial on $I_K^e$ and $I_{K'}^e$, respectively, and that correspond in the sense of \S\ref{rep-corr}. Let
\[
\psi \colon (K, +) \to \bC^{\times} \qq \text{and} \qq \psi' \colon (K', +) \to \bC^{\times}
\]
 be nontrivial additive characters that correspond in the sense of \Cref{psi-corr}. Then the resulting local root numbers are equal: 
\[
w(V,\psi)=w(V',\psi').
\] 
\end{prop}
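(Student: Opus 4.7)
The plan is to follow the pattern used in \cite{Del84}*{\S3}: reduce the claim to the one-dimensional case via the additivity and inductivity properties of the local $\eps$-factor, and then match each ingredient of the explicit Gauss sum formula for a character using the isomorphism of triples and the correspondence of $\psi$ and $\psi'$.

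First I would use additivity in direct sums, $w(V_1\oplus V_2,\psi)=w(V_1,\psi)\cdot w(V_2,\psi)$, together with the compatibility of $\Xi$ with direct sums, to reduce to a generating set of the Grothendieck group of smooth finite-dimensional complex representations of $W_K/I_K^e$. Brauer induction, in its form appropriate for Weil groups, expresses each such class as a $\bZ$-linear combination of inductions $\Ind_{W_F}^{W_K}\chi$ where $F/K$ is a finite separable extension lying in $\sfE_e(K)$ and $\chi$ is a character of $W_F$. Inductivity in degree zero of the $\eps$-factor, together with the standard compatibility via the Langlands $\lambda$-constants $\lambda(F/K,\psi)$, then reduces the proposition for $V=\Ind_{W_F}^{W_K}\chi$ to the proposition for the character $\chi$ paired with $\psi_F\ce\psi\circ\Tr_{F/K}$. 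The induced isomorphism of triples \eqref{phiext} of \S\ref{phie} identifies $F'=\Xi(F)$, takes $\chi$ to a corresponding character $\chi'$ of $W_{F'}$, and---by compatibility of the trace with the isomorphism of triples---takes $\psi_F$ to $\psi'_{F'}\ce\psi'\circ\Tr_{F'/K'}$; moreover the $\lambda$-constants match across the two sides, so this reduction is clean.

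This leaves the one-dimensional case. For a character $\chi$, the root number $w(\chi,\psi)$ is computed by a normalized Gauss sum whose ingredients are the conductor exponent $a(\chi)$, the valuation of $\psi$, and the restrictions of $\chi\circ\rec_K^{-1}$ and of $\psi$ to appropriate filtration quotients of $K^\times$ and $(K,+)$. The equality $a(\chi)=a(\chi')$ follows from \Cref{Deligne} and the standard relationship between conductor exponents and upper-numbering ramification breaks; the restrictions of $\chi\circ\rec_K^{-1}$ and $\chi'\circ\rec_{K'}^{-1}$ to the quotient $U_K^{a(\chi)-1}/U_K^{a(\chi)}$ are identified via $\phi$ through the compatibility of $\Xi$ with local class field theory; and the matching of the relevant pieces of $\psi$ and $\psi'$ is precisely the content of \Cref{psi-corr}. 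The main obstacle is this last step: organizing the verification so that every input of the Gauss sum is visible at truncation level $e$ of the triple and seen to match. This bookkeeping is the content of \cite{Del84}*{3.6--3.7}, which we would invoke to conclude.
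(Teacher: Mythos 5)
Your proposal and the paper's proof both rest on the same cornerstone, namely Deligne's comparison of $\eps$-factors in \cite{Del84}*{3.6--3.7}; the paper simply observes that $w(V,\psi)=\eps(V,\psi,dx)/\abs{\eps(V,\psi,dx)}$, fixes Haar measures giving $\cO_K$ and $\cO_{K'}$ volume $1$, and cites \cite{Del84}*{3.7.1} directly, whereas you re-sketch the internal Brauer-induction/Gauss-sum structure of Deligne's argument before ultimately citing the same result. This is a more verbose route to the identical endpoint, so it is correct and not a genuinely different proof.
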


\bpf
By definition,
\[
\textstyle w(V, \psi) = \f{\eps(V,\, \psi,\, dx)}{\abs{\eps(V,\, \psi,\, dx)}},
\]
 where $dx$ is any nonzero Haar measure on $K$, and likewise for $w(V', \psi')$. Thus, it suffices to apply \cite{Del84}*{3.7.1} after choosing the Haar measures in a way that the volumes of $\cO_K$ and $\cO_{K'}$ are $1$.
\epf

%---------------------------------------------------------

\section{Approximation by an elliptic curve over a different local field}\label{appell}

The goal of \S\ref{appell} is to prove in \Cref{existe} that various invariants of an elliptic curve $E$ over a nonarchimedean local field $K$ are locally constant functions of the coefficients of a Weierstrass equation for $E$. The key difference from such continuity statements available in the literature, e.g.,~from \cite{DD11}*{Prop.~3.3} or \cite{Hel09}*{Prop.~4.2}, is that, in addition to $E$, we also vary $K$. Even though the proofs are still based on Tate's algorithm, new input is needed for local constancy of $\ell$-adic Tate modules because the results of \cite{Kis99} no longer apply. This new input comes from \cite{DD15}, which proves that in the potential good reduction case the $\ell$-adic Tate module is determined as a representation of the Weil group by its local $L$-factors taken over sufficiently many finite separable extensions of $K$.

\begin{lem}\label{repfam}
Let $K$ be a nonarchimedean local field and let $\sigma$ be a finite dimensional, smooth, semisimple, complex representation of $W_K$. For a finite separable extension $L/K$, up to isomorphism there are only finitely many finite dimensional, smooth, semisimple, complex representations $\sigma'$ of $W_K$ such that $\sigma'|_{W_{L}} \simeq \sigma|_{W_{L}}$.
\end{lem}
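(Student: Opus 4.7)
The plan is to exploit the fact that $W_L \subset W_K$ is an open subgroup of finite index $[L:K]$, and to use Frobenius reciprocity to trap the irreducible constituents of any such $\sigma'$ inside a single finite-dimensional representation of $W_K$.

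Concretely, I would set $\tau \ce \Ind_{W_L}^{W_K}(\sigma|_{W_L})$, which is a finite-dimensional smooth complex representation of $W_K$ of dimension $[L:K] \cdot \dim \sigma$, and hence has only finitely many irreducible subrepresentations up to isomorphism. Suppose $\sigma'$ is as in the statement, and decompose it as $\sigma' \simeq \bigoplus_{i} \pi_i^{\oplus n_i}$ with the $\pi_i$ pairwise nonisomorphic irreducible smooth representations of $W_K$ and $n_i \in \bZ_{\ge 1}$. For each $i$, the inclusion $\pi_i \hookrightarrow \sigma'$ restricts to an embedding $\pi_i|_{W_L} \hookrightarrow \sigma'|_{W_L} \simeq \sigma|_{W_L}$, and Frobenius reciprocity then furnishes a nonzero map $\pi_i \to \tau$, which must be an embedding by irreducibility of $\pi_i$. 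Therefore every $\pi_i$ is isomorphic to one of the finitely many irreducible subrepresentations of $\tau$.

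It remains to bound the multiplicities: comparing dimensions gives $\sum_i n_i \dim \pi_i = \dim \sigma' = \dim \sigma$, so $n_i \le \dim \sigma$ for every $i$. Hence there are only finitely many choices for the tuple $(n_i)_i$, and consequently only finitely many isomorphism classes for $\sigma'$. No real obstacle is expected — the only small point to verify is that restriction to a finite-index open subgroup preserves semisimplicity (so that $\sigma|_{W_L}$ is itself semisimple and Frobenius reciprocity can be applied cleanly), which follows from Clifford's theorem applied to the core $\bigcap_{g \in W_K/W_L} g W_L g^{-1}$ of $W_L$ in $W_K$.
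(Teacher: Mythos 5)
Your proof is correct and rests on the same key mechanism as the paper's: applying Frobenius reciprocity to relate $\sigma'$ to the finite-dimensional representation $\tau = \Ind_{W_L}^{W_K}(\sigma|_{W_L})$. The execution differs in a useful way. The paper takes a $W_L$-isomorphism $\lambda\colon \sigma'|_{W_L} \isomto \sigma|_{W_L}$, converts it via Frobenius reciprocity to a $W_K$-map $\kappa\colon \sigma' \to \tau$, proves $\kappa$ is injective using the functoriality of the reciprocity bijection, and then invokes semisimplicity of $\Ind_{W_L}^{W_K}(\sigma|_{W_L})$ (citing \cite{BH06}*{\S28.7}) to realize $\sigma'$ as a direct summand of $\tau$. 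You instead work one irreducible constituent $\pi_i$ of $\sigma'$ at a time: the induced nonzero map $\pi_i \to \tau$ is automatically injective by irreducibility, so you get by without the functoriality-based injectivity argument and without needing $\tau$ to be semisimple --- finite-dimensionality of $\tau$ alone already bounds the irreducible subrepresentations, and the multiplicities are controlled by the dimension equality $\dim\sigma' = \dim\sigma$. This makes your version slightly more self-contained. One small remark: your closing worry about whether $\sigma|_{W_L}$ is semisimple is not actually needed for your argument --- Frobenius reciprocity holds with no semisimplicity hypothesis, and you never use semisimplicity of $\sigma|_{W_L}$ elsewhere --- so that parenthetical could simply be dropped.
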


\bpf
We will use the Frobenius reciprocity bijection 
\be \lab{FR}
\Hom_{W_K}(\sigma', \Ind_{W_L}^{W_K}(\sigma|_{W_L})) \cong \Hom_{W_L}(\sigma'|_{W_L}, \sigma|_{W_L}),
\ee
as well as the functoriality of this bijection in the $W_K$-representation $\sigma'$ (cf.~\cite{BH06}*{\S2.4}). Namely, under \eqref{FR}, a $W_L$-isomorphism 
\[
\lambda\colon \sigma'|_{W_{L}} \isomto \sigma|_{W_{L}}
\]
corresponds to a $W_K$-morphism 
\[
\kappa\colon \sigma' \ra \Ind_{W_L}^{W_K} (\sigma|_{W_{L}})
\]
that must be injective because the composition of $\kappa$ with the injection $\Ker \kappa \hra \sigma'$ of $W_K$-representations both vanishes and, by the functoriality of \eqref{FR} in $\sigma'$, corresponds to the injection $\lambda|_{\Ker \kappa}$. The injectivity of $\kappa$ and the semisimplicity of  $\Ind_{W_L}^{W_K} (\sigma|_{W_{L}})$ (cf.~\cite{BH06}*{\S28.7}) ensure that $\sigma'$ is a direct summand of $\Ind_{W_L}^{W_K} (\sigma|_{W_{L}})$, and the claim follows by considering decompositions into direct sums of irreducibles. 
\epf

\bprop\label{existe}
For a nonarchimedean local field $K$ and a Weierstrass equation
\[
E: y^2 + a_1xy + a_3 y = x^3 + a_2x^2 + a_4 x + a_6 \qq \text{with} \q a_i \in \cO_K \q \text{and discriminant} \q \Delta \neq 0,
\]
there is an integer $e_{K, E} \in \bZ_{ \ge 1}$ such that whenever one has
\begin{itemize}
\item
A nonarchimedean local field $K'$, 

\item 
A ring isomorphism 
$\phi\colon \cO_K/\fm_K^{e} \isomto \cO_{K'}/\fm_{K'}^{e}$ 
for some $e \ge e_{K, E}$, and

\item 
Elements $a_i'\in \cO_{K'}$ lifting the corresponding 
$\phi(\ov{a_i}) \in \cO_{K'}/\fm_{K'}^{e}$, 
\end{itemize}

then, letting $\bF$ denote the common residue field of $K$ and $K'$, one gets the following conclusions:

\benumr
\item \lab{loop-1}
The resulting Weierstrass equation 
\[
 \qq E': y^2 + a_1'xy + a_3' y = x^3 + a_2'x^2 + a_4' x + a_6' \qq \text{with discriminant $\Delta'$}% \q a_i' \in \cO_{K'} \q 
 %\text{such that} \q \phi(\ov{a_i}) = \ov{a_i'},
\]
defines an elliptic curve $E' \ra \Spec K'$ and 
\[
v_K (\Delta) = v_{K'} (\Delta');
\] 

\item \lab{loop-2}
The minimal discriminants $\Delta_{\min}$ and $\Delta'_{\min}$ of $E$ and $E'$ satisfy
\[
v_K (\Delta_{\min}) = v_{K'} (\Delta'_{\min});
\]

\item \lab{loop-3}
The N\'{e}ron models $\cE \ra \Spec \cO_K$ and 
$\cE' \ra \Spec \cO_{K'}$ of $E$ and $E'$ satisfy 
\[
\qqq \cE_\bF^0 \simeq \cE^{\prime 0}_{\bF} \qq \text{ and } \qq (\cE_\bF/\cE^0_\bF) (\bF) \simeq (\cE^{\prime}_{\bF}/\cE^{\prime 0}_{\bF}) (\bF);
\]

\item \lab{add-cond}
The conductor exponents of $E$ and $E'$ are equal;

\item \lab{Kod-add}
The Kodaira types of $E$ and $E'$ agree;

\item \lab{loop-4}
If $E$ has potential good reduction, then so does $E'$ and for every prime $\ell \neq \Char \bF$ the representation
$\sigma_{E'} \colon W_{K'} \ra \GL \bigl( V_{\ell} (E')^* \bigr)$ 
factors through $W_{K'}/I_{K'}^{e}$; 

\item \lab{loop-add}
If $E$ has potential multiplicative reduction, then so does $E'$ and for every prime $\ell \neq \Char \bF$ the representation
$\sigma_{E'} \colon W_{K'} \ra \GL \bigl( V_{\ell} (E')^* \bigr)$ 
factors through $W_{K'}/I_{K'}^{e}$; 

\item \lab{loop-5}
If $E$ has potential good reduction, then for every extension of $\phi$ to an isomorphism of triples \eqref{t-iso} as in \S\ref{iso-trip} and for every embedding $\iota\colon \bQ_{\ell} \hra \bC$, the complex representations 
\[
\qq\sigma_{E} \tensor_{\bQ_{\ell}, \iota} \bC \q \text{of} \q W_K/I_K^e \qq \text{ and } \qq \sigma_{E'} \tensor_{\bQ_{\ell}, \iota} \bC \q \text{of} \q W_{K'}/I_{K'}^{e} 
\] 
correspond in the sense of \S\ref{rep-corr}.

\item \lab{loop-6}
The local root numbers are equal: 
\[
\q w(E) = w(E').
\]

\end{enumerate}
\eprop

\bpf
We prove \ref{loop-1}--\ref{loop-6} one by one, each time enlarging the cumulative $e_{K, E}$ if needed. 

\benumr
\item
As is seen from, say, \cite{Del75}*{1.5}, $\Delta$ (resp.,~$\Delta'$) is the value obtained by plugging in the $a_i$ (resp.,~the $a_i'$) into some explicit polynomial with integral coefficients. Therefore, any choice of an $e_{K, E}$ greater than $v_K (\Delta)$ will ensure that $\Delta' \neq 0$ and that $v_K(\Delta) = v_{K'}(\Delta')$.

\item
Choose a uniformizer $\pi_K \in \cO_K$ and use it to execute Tate's algorithm presented in \cite{Sil94}*{IV.\S9, pp.~366--368} to the Weierstrass equation $E$ (our $\pi_K$ is denoted by $\pi$ in loc.~cit.). The execution consists of finitely many steps,\footnote{Our ``steps'' are substeps of the steps presented in loc.~cit.} each one of which depends solely on the results of the preceding steps and is of one of the following types:
\benuma
\item \lab{step-1}
Evaluate some predetermined explicit polynomial with integral coefficients at an argument whose coordinates are in $\cO_K$ and of the form $\pi_K^{-j}a_i$ with $j \ge 0$ for some predetermined $i$ and $j$, and then check whether the value is in $\fm_K$ or not;

\item \lab{step-2}
Form some explicit either cubic or quadratic polynomial with coefficients in $\cO_K$ and of the form $\pi_K^{-j}a_i$ with $j \ge 0$ for some predetermined $i$ and $j$, and then find the roots in $\ov{\bF}$ (with multiplicities) of the reduction of this polynomial modulo $\fm_K$ (a multiple root is always forced to be in $\bF$);

\item \lab{step-3}
Lift some predetermined element $r \in \bF$ to an $R \in \cO_K$, make one of the following substitutions:
\[
\textstyle \qqqq x \mapsto x + \pi_K^sR, \q \text{or} \q y \mapsto y + \pi_K^sR, \q \text{or} \q y \mapsto y + \pi_K^sRx
\]
for some predetermined $s \in \bZ_{\ge 0}$, and then compute the new $a_i$ (in most cases, $r$ is related to a multiple root mentioned in \ref{step-2}).
\eenum

The total number of steps in the chosen execution is finite, so the structure of the algorithm and the descriptions \ref{step-1}--\ref{step-3} imply that there is some large $e_{K, E} \in \bZ_{\ge 1}$ that forces the execution of the algorithm for any $E'$ to be entirely analogous granted that we choose the lifts $R' \in \cO_{K'}$ needed in \ref{step-3} and a uniformizer $\pi_{K'}$ used throughout subject to
\[
\qqq \phi(\ov{R}) = \ov{R'} \qq \text{and} \qq \phi(\ov{\pi_K}) = \ov{\pi_{K'}}.
\]
Such choices are possible, so the resulting execution for $E'$ will have the same sequence of steps with the same results as the fixed execution for $E$. 

The difference
\[
\qqq v_{K'}(\Delta') - v_{K'}(\Delta'_{\min})
\]
 depends solely on the sequence and the results of the steps. Therefore, once the $e_{K, E}$ of \ref{loop-1} is enlarged to exceed the $e_{K, E}$ of the previous paragraph, the equality
\[
\qqq v_K(\Delta_{\min}) = v_{K'}(\Delta'_{\min})
\]
follows from the equality $v_K(\Delta) = v_{K'}(\Delta')$ supplied by \ref{loop-1}.

\item
By loc.~cit.,~the $\bF$-group scheme $\cE_{\bF}^{\prime 0}$ and the abelian group $(\cE^{\prime}_{\bF}/\cE^{\prime 0}_{\bF}) (\bF)$ also depend solely on the sequence and the results of the steps in an execution of Tate's algorithm for $E'$. Therefore, the proof of \ref{loop-2} also gives \ref{loop-3}.

\item
The proof is the same as the proof of \ref{loop-3}.

\item
The proof is the same as the proof of \ref{loop-3}.

\item
Fix a finite Galois extension $L$ of $K$ over which $E$ has good reduction---for definiteness, set $L \ce K(E[\ell_0])$, where $\ell_0$ is the smallest odd prime with $\ell_0 \neq \Char \bF$. Since \ref{loop-1}--\ref{loop-3} are already settled, there is an $e_{L, E} \in \bZ_{\ge 1}$ for which the Weierstrass equation $E$ viewed over $L$ satisfies the conclusions of \ref{loop-1}--\ref{loop-3}. Fix a $u \in \bR_{> 0}$ such that the $u\th$ ramification group of $L/K$ in the upper numbering is trivial, and choose an $e_{K, E} \in \bZ_{\ge 1}$ subject to 
\[
\qqq e_{K, E} \ge \max(e_{L, E}, u).
\]
With $e_{K, E}$ at hand, consider an isomorphism $\phi$ as in the claim. Use \S\ref{iso-trip} to choose an extension of $\phi$ to an isomorphism of triples \eqref{t-iso}, and then let
\[
\qqq \Xi \colon \sfE_{e} (K) \xrightarrow{\sim} \sfE_{e} (K')
\]
 denote an equivalence of categories that results from Theorem \ref{Deligne}. Since $e \ge e_{K, E} \ge u$, the extension $L$ is an object of $\sfE_{e}(K)$. Set $L' \ce \Xi(L)$ and let
\[
\qqq \phi_L \colon \cO_{L} / \fm_{L}^e \xrightarrow{\sim} 
 \cO_{L'}/ \fm_{L'}^e
 \]
be the extension of $\phi$ supplied by \eqref{phiext}. Then, since $e \ge e_{K, E} \ge e_{L, E}$, we conclude from \ref{loop-3} that the Weierstrass equation $E'$ viewed over $L'$ gives rise to an elliptic curve that has good reduction. Therefore, due to the criterion of N\'{e}ron--Ogg--Shafarevich, $\sigma_{E'}$ factors through $W_{K'}/I_{K'}^e$.

\item
The proof is completely analogous to the proof of \ref{loop-4}: with the same notation, $E'$ will have multiplicative reduction over $L'$, so the inertial action of $W_{L'}$ on $V_\ell(E')^*$ will be tame, and hence $I_{K'}^e$, being pro-$p$ (with $p = \Char \bF$), will have to act trivially.

\item
We adopt the notations of the proof of \ref{loop-4} and, for brevity, we set 
\[
\qqq \sigma \ce \sigma_{E} \tensor_{\bQ_{\ell}, \iota} \bC \qq \text{ and } \qq \sigma' \ce \sigma_{E'} \tensor_{\bQ_{\ell}, \iota} \bC.
\]
If we use an isomorphism $W_K/I_K^e \simeq W_{K'}/I_{K'}^e$ induced by $\Xi$ to view $\sigma'$ as a representation of $W_K$, then, due to the choice of $e_{K, E}$ made in \ref{loop-4}, \ref{loop-3} applied to $E$ over $L$ ensures that there is a $W_L$-isomorphism 
\[
\qqq \lambda\colon \sigma|_{W_L} \simeq \sigma'|_{W_L}.
\]
Thus, by \Cref{repfam}, there is a finite set of finite dimensional, smooth, semisimple complex $W_K$-representations that depends solely on the data determined by the coefficients $a_i$ (for instance, on $\sigma$ and on $L$) and such that $\sigma'$ is isomorphic to one of the representations in this set. Due to \cite{DD15}*{Thm.~2.1}, this set then gives rise to a finite set $\{K_j\}_{j \in J}$ of finite separable extensions of $K$ such that $\lambda$ may be upgraded to a sought $W_K$-isomorphism $\sigma \simeq \sigma'$ if and only if there is an equality of $L$-factors
\be \lab{L-eq}
\qq L(\sigma|_{W_{K_j}}, s) = L(\sigma'|_{W_{K_j}}, s) \qq \text{for every} \quad K_j.
\ee
The local $L$-factor of an elliptic curve is determined by the connected component of the identity of the special fiber of the N\'{e}ron model, so, due to \S\ref{phie}, \eqref{L-eq} will be forced to hold once we enlarge $e_{K, E}$ to exceed every $e_{K_j, E}$, where $e_{K_j, E}$ is a fixed integer for which the Weierstrass equation $E$ viewed over $K_j$ satisfies the conclusions of \ref{loop-1}--\ref{loop-3}.

\item
If $E$ has potential good reduction, then the claim follows from \ref{loop-5} and \Cref{epscomp} (combined with \Cref{psipr-exists}) because, by definition,
\[
\qqq w(E) = w(\sigma_{E} \tensor_{\bQ_{\ell}, \iota} \bC,\, \psi)
\]
 for any embedding $\iota$ and any nontrivial additive character $\psi$ and likewise for $E'$. 

If $E$ has split multiplicative (resp.,~nonsplit multiplicative) reduction, then, by \ref{loop-3}, so does $E'$, and the claim follows from the local root number being $-1$ (resp.,~$1$) in the split multiplicative (resp.,~nonsplit multiplicative) case.

In the remaining case when $E$ has additive potential multiplicative reduction, let $F$ be the separable ramified quadratic extension of $K$ over which $E$ has split multiplicative reduction. Fix a $v \in \bR_{> 0}$ such that the $v\th$ ramification group of $F/K$ in the upper numbering is trivial, fix an $e_{F, E} \in \bZ_{\ge 1}$ for which the Weierstrass equation $E$ viewed over $F$ satisfies the conclusions of \ref{loop-1}--\ref{loop-3}, enlarge $e_{K, E}$ so that
\[
\qqq e_{K, E} \ge \max(e_{F, E}, v),
\]
and set $F' \ce \Xi(F)$, where $\Xi$ is obtained as in the proof of \ref{loop-4}. Then, by \ref{loop-3} and \eqref{phiext}, $E'$ has additive reduction over $K'$ and split multiplicative reduction over $F'$. Let 
\[
\qq \nu_{F/K}\colon W_K/I_K^e \surjects \{ \pm 1\} \qq \text{ and } \qq \nu_{F'/K'}\colon W_{K'}/I_{K'}^e \surjects \{\pm 1\}
\]
be the quadratic characters with kernels $W_F/I_K^e$ and $W_{F'}/I_{K'}^e$; then $\nu_{F/K}$ and $\nu_{F'/K'}$ correspond in the sense of \S\ref{rep-corr}. Let $\omega$ and $\omega'$ be the cyclotomic characters of $W_K$ and $W_{K'}$; then $\omega$ and $\omega'$ are unramified and also correspond. By \cite{Roh94}*{\S15, Prop.},~the complex Weil--Deligne representations associated to $E$ and $E'$ are isomorphic to
\[
\qq (\nu_{F/K} \cdot \omega\i) \tensor \mathrm{sp}(2) \qq \text{and} \qq (\nu_{F'/K'} \cdot \omega^{\prime -1}) \tensor \mathrm{sp}(2),
\]
respectively. In particular, since $\nu_{F/K}$ and $\nu_{F'/K'}$ are ramified, 
\[
\qq\q w(E) = w(\nu_{F/K} \cdot \omega\i \oplus \nu_{F/K}, \psi) \qq \text{and} \qq w(E') = w(\nu_{F'/K'} \cdot \omega^{\prime -1} \oplus \nu_{F'/K'}, \psi')
\]
for any nontrivial additive characters
\[
\qq \psi\colon (K, +) \ra \bC^\times \qq \text{and}\qq \psi'\colon (K', +) \ra \bC^\times.
\]
 It remains to apply \Cref{epscomp} (with \Cref{psipr-exists}) to get $w(E) = w(E')$.
 \qedhere
\end{enumerate}

\end{proof}

\brem
The choice $K' = K$ in \Cref{existe} recovers some of the local constancy statements referred to in the beginning of \S\ref{appell}.
\erem

%---------------------------------------------------

\section{Reduction of the Kramer--Tunnell conjecture to the characteristic $0$ case}\label{redKT}

Throughout \S\ref{redKT}, we fix a nonarchimedean local field $K$ of positive characteristic $p$, a nontrivial character $\chi\colon W_K \surjects \{\pm 1\} \subset \bC^\times$ together with the corresponding separable quadratic extension $K_{\chi}$ of $K$, and an elliptic curve $E \ra \Spec K$ together with a choice of its integral Weierstrass equation
\be \tag{$\lozenge$} \lab{Weq}
E: y^2 + a_1xy + a_3 y = x^3 + a_2x^2 + a_4 x + a_6 \qq \text{with} \q a_i \in \cO_K \q \text{and discriminant} \q \Delta.
\ee
We let $E_{\chi}$ denote the quadratic twist of $E$ by $\chi$. 

Our goal is to show that the Kramer--Tunnell conjecture \eqref{KTconj} for $E$ and $\chi$ follows if it holds for some other elliptic curve $E' \ra \Spec K'$ and some other $\chi'$ with $[K' : \bQ_p] < \infty$. Since the Kramer--Tunnell conjecture is known in characteristic $0$, this allows us to deduce it in general in \Cref{main-thm}. The Kramer--Tunnell conjecture is also known in the case of odd residue characteristic, so we could have also assumed that $p = 2$; we avoid this assumption because the deformation to characteristic $0$ argument is simpler in the case $p \neq 2$ and still contains all the essential ideas.

\bpp[A Weierstrass equation for $E_\chi$ in the case $p \neq 2$] \lab{pneq2}
If $p \neq 2$, then $K_\chi = K(\sqrt{d})$ for some $d \in \cO_K$ normalized by $v_K(d) \le 1$. With this normalization, $(d) \subset \cO_K$ is necessarily the discriminant ideal of $K_\chi/K$. As noted in \cite{KT82}*{(7.4.2) on p.~331}, an integral Weierstrass equation for $E_\chi$ is
\[
\textstyle E_\chi\colon y^2 = x^3 + \f{1}{4}d(a_1^2 + 4a_2)x^2 + \f{1}{2} d^2(a_1a_3 + 2a_4)x + \f{1}{4} d^3(a_3^2 + 4a_6),
\]
and the discriminant $\Delta_\chi$ of this equation equals $d^6\Delta$.
\epp

\bpp[A Weierstrass equation for $E_\chi$ in the case $p = 2$]\lab{peq2}
If $p = 2$, then $K_\chi = K(\theta)$, where $\theta$ is a root of the Artin--Schreier equation $x^2 - x + \gamma = 0$ for some $\gamma \in K$. We fix one such $\gamma$ normalized by insisting that $v_K(\gamma) \le 0$ and that either $v_K(\gamma)$ be odd or $v_K(\gamma) = 0$;\footnote{This normalization of $\gamma$ will ensure that the discriminant of $K_\chi/K$ is $(\pi_K^{2r})$, which will be needed for our reliance on \cite{KT82}*{p.~331 and Theorem 7.6}. For general $\gamma$, the claim in the last paragraph of \cite{KT82}*{p.~331} that $d$ generates the discriminant ideal of $K/F$ becomes incorrect (for instance, the discriminant ideal of the extension of $\bF_2((t))$ cut out by the polynomial $x^2 - x + \f{1}{t^4} = (x + \f{1}{t} + \f{1}{t^2})^2 - (x + \f{1}{t} + \f{1}{t^2}) + \f{1}{t}$ is $(t^2)$ and not $(t^4)$); the normalization of $\gamma$ that we have imposed avoids this issue. } to see that these requirements may be met, use an isomorphism $K \simeq \bF((t))$, where $\bF$ is the residue field of $K$. Furthermore, we fix a uniformizer $\pi_K \in \cO_K$ and let $r \in \bZ_{\ge 0}$ be minimal such that $\pi^{2r}_K\gamma \in \cO_K$. Then 
\[
\q X^2 - \pi_K^r X + \pi_K^{2r}\gamma = 0 \qq \text{ is the minimal polynomial of } \pi_K^{r}\theta.
\]
Moreover, if $v_K(\gamma)$ is odd, then $K_\chi/K$ is ramified and $\pi_K^{r}\theta$ is a uniformizer of $\cO_{K_\chi}$, and if $v_K(\gamma) = 0$, then $K_\chi/K$ is unramified. In both cases, $\cO_{K_\chi} = \cO_K[\pi_K^r\theta]$, so the discriminant ideal of $K_\chi/K$ is $(\pi_K^{2r}) \subset \cO_K$. By \cite{KT82}*{(7.4.4) on p.~331}, an integral Weierstrass equation for $E_\chi$ is
\[
E_\chi\colon  y^2 + \pi_K^r a_1xy + \pi_K^{3r} a_3 y 
 = x^3 + \pi_K^{2r}(a_2 +\gamma a_1^2)x^2 +\pi_K^{4r} a_4x 
 + \pi_K^{6r} (a_6 +\gamma a_3^2),
\]
and the discriminant $\Delta_\chi$ of this equation equals $\pi_K^{12r} \Delta$.
\epp

\bpp[Construction of $K'$] \lab{Kprime}
Let $c(\chi)$ be the smallest nonnegative integer with $\chi (I_K^{c(\chi)}) = 1$. Fix positive integers $e_{K,E}$ and $e_{K_\chi, E}$ for which \Cref{existe} holds for the Weierstrass equation \eqref{Weq} viewed over $K$ and over $K_\chi$, respectively. Fix a positive integer $e_{K, E_\chi}$ for which \Cref{existe} holds for the Weierstrass equation of $E_\chi$ chosen in \S\ref{pneq2} or in \S\ref{peq2}. Finally, fix an $e \in \bZ_{\ge 1}$ subject to
\[
 e \ge \max \{e_{K,E}, e_{K_{\chi}, E}, e_{K, E_{\chi}},  v_K(\Delta) + c(\chi), v_K(\mathfrak{d}_{K_\chi/K})  \},
 \]
 where $\mathfrak{d}_{K_\chi/K} \subset \cO_K$ is the discriminant ideal of $K_\chi/K$. With this choice of $e$, one has $K_{\chi} \in \sfE_e (K)$ in the notation of \S\ref{cat-E}. With $e$ at hand, set
\[
\textstyle K' \ce \p{W(\bF)[T]/(T^e - p)}[\f{1}{p}], \qq \text{so that} \qq \cO_{K'} = W(\bF)[T]/(T^e - p),
\]
where $W(\bF)$ is the ring of Witt vectors of the residue field $\bF$ of $K$. Fix an isomorphism $K \simeq \bF((t))$ and define the $\bF$-algebra isomorphism 
\[
 \phi \colon \cO_K/ \fm_K^e 
 \xrightarrow{\sim} 
 \cO_{K'}/ \fm_{K'}^e \qqq \text{by the requirement} \q \phi (\ov{t})=\ov{T}
\]
 and the $\phi$-semilinear isomorphism 
\[
 \eta \colon \fm_K / \fm_K^{e+1} 
 \xrightarrow{\sim} 
 \fm_{K'} / \fm_{K'}^{e+1} \qqq \text{by the requirement} \q  \eta (\ov{t})=\ov{T}.
\]
The pair $(\phi, \eta)$ induces an isomorphism of triples as in \eqref{t-iso}. Therefore, by \Cref{Deligne}, $(\phi, \eta)$ gives rise to an
equivalence of categories 
$\Xi \colon \sfE_e (K) \xrightarrow{\sim} \sfE_e (K')$ and to an isomorphism 
$W_K/I_K^e \simeq W_{K'}/I_{K'}^e$.
\epp

\bpp[Construction of $E'$] \lab{Eprime}
With $K'$ as in \S\ref{Kprime}, choose elements $a_i'\in \cO_{K'}$ lifting 
$\phi(\ov{a_i}) \in \cO_{K'}/\fm_{K'}^{e}$ and consider the Weierstrass equation 
\be\tag{$\lozenge'$} \lab{Weqpr}
E': y^2 + a_1'xy + a_3' y = x^3 + a_2'x^2 + a_4' x + a_6'.
\ee
By \Cref{existe}~\ref{loop-1}, the equation \eqref{Weqpr} defines an elliptic curve $E' \ra \Spec K'$.
\epp

\bpp[Construction of $\chi'$] \lab{chiprime}
Define the quadratic character $\chi'$ by
\[
\chi'\colon W_{K'}/I_{K'}^e \surjects (W_{K'}/I_{K'}^e)^{\ab} \cong (W_{K}/I_{K}^e)^{\ab} \xra{\ \chi\ } \{ \pm 1\} \subset \bC^\times,
\]
where the middle isomorphism is canonical because the isomorphism $W_K/I_K^e \simeq W_{K'}/I_{K'}^e$ mentioned in \S\ref{Kprime} is well-defined up to an inner automorphism. The characters $\chi$ and $\chi'$ correspond in the sense of \S\ref{rep-corr} and, by construction, $\Xi(K_\chi)$ is the separable quadratic extension $K'_{\chi'}$ of $K'$ fixed by the kernel of $\chi'$. Moreover, by \cite{Del84}*{2.1.1 (iii)}, $\chi\pr(I_{K'}^{c(\chi)}) = 1$.
\epp

\blem \lab{root-eq}
The local root numbers of $E_{K_\chi}$ and of $E'_{K'_{\chi'}}$ are equal:
\[
w(E_{K_\chi}) = w(E'_{K'_{\chi'}}).
\]
\elem

\bpf
Combine \S\ref{phie} with \Cref{existe}~\ref{loop-6}.
\epf

\begin{lem} \lab{chi-eq}
The discriminants $\Delta$ and $\Delta'$ of the Weierstrass equations \eqref{Weq} and \eqref{Weqpr} satisfy 
\[
\chi (\Delta)=\chi' (\Delta').
\]
\end{lem}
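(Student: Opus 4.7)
The plan is to verify that $\Delta$ and $\Delta'$ correspond under the natural identification of $K^\times$ and $K'^\times$ modulo their $c(\chi)\th$ unit subgroups, and then to invoke the compatibility between Deligne's equivalence and local class field theory. Set $n \ce v_K(\Delta)$, so that $n = v_{K'}(\Delta')$ by \Cref{existe}~\ref{loop-1}. Since $\Delta$ is a universal polynomial with integer coefficients in the $a_i$ (and the same polynomial yields $\Delta'$ from the $a_i'$), the ring isomorphism $\phi$ satisfies $\phi(\ov{\Delta}) = \ov{\Delta'}$ in $\cO_{K'}/\fm_{K'}^e$. Combined with $\phi(\ov{\pi_K}) = \ov{\pi_{K'}}$, this forces the reduction of $\phi$ modulo $\fm^{e-n}$ to send the image of $\Delta/\pi_K^n \in \cO_K^\times$ to the image of $\Delta'/\pi_{K'}^n \in \cO_{K'}^\times$; together with the uniformizer correspondence $\pi_K \leftrightarrow \pi_{K'}$, this upgrades to an isomorphism $K^\times/U_K^{e-n} \isomto K'^\times/U_{K'}^{e-n}$ sending the class of $\Delta$ to that of $\Delta'$.

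By the choice of $e$ in \S\ref{Kprime}, one has $e - n \ge c(\chi)$. Via the standard compatibility of the upper-numbering ramification filtration with the unit filtration under local class field theory, $\chi$ (viewed as a character of $K^\times$) factors through $K^\times/U_K^{c(\chi)}$, and $\chi'$ factors through $K'^\times/U_{K'}^{c(\chi)}$ since $\chi'(I_{K'}^{c(\chi)}) = 1$ by \S\ref{chiprime}. The lemma therefore reduces to showing that the isomorphism $K^\times/U_K^{c(\chi)} \isomto K'^\times/U_{K'}^{c(\chi)}$ induced by $(\phi, \eta)$ intertwines $\chi$ and $\chi'$.

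This last intertwining---the compatibility of $(\phi, \eta)$ with reciprocity on both sides---is the main obstacle. It should be extractable from \cite{Del84}*{3.5}, whose construction of the Galois group isomorphism is designed precisely to match reciprocity maps through the triple $(\phi, \eta)$. Alternatively, one can use that $\Xi(K_\chi) = K'_{\chi'}$ (noted in \S\ref{chiprime}) to identify the kernels of $\chi$ and $\chi'$ with images of norm groups from $K_\chi$ and $K'_{\chi'}$, and then derive the desired correspondence from the explicit description of $\Xi$ recalled in \S\ref{descXi}.
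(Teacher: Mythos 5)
Your overall strategy coincides with the paper's: reduce to showing that the class of $\Delta$ in $K^\times/U_K^{c(\chi)}$ matches the class of $\Delta'$ in $K'^\times/U_{K'}^{c(\chi)}$ under an isomorphism induced by $(\phi,\eta)$, and then transport $\chi$ to $\chi'$ via local class field theory. Your bookkeeping is correct---the universal integral polynomial gives $\phi(\ov{\Delta}) = \ov{\Delta'}$, the choice $e \ge v_K(\Delta) + c(\chi)$ gives $e - n \ge c(\chi)$, and stripping off $\pi_K^n$ works as you describe.

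However, the step you label ``the main obstacle''---compatibility of the triple isomorphism with the reciprocity maps on both sides---is exactly where your argument stops, and it cannot simply be waved through. The paper closes this by invoking \cite{Del84}*{1.2} for the induced isomorphism $\xi \colon K^\times/U_K^e \isomto K'^\times/U_{K'}^e$, and then \cite{Del84}*{3.6.1} (not 3.5) for the precise statement that $\xi$ intertwines local class field theory with the Galois/Weil group isomorphism of \Cref{Deligne}; your reference to \cite{Del84}*{3.5} gives the Galois-group isomorphism but not by itself the compatibility with reciprocity. Note also that your proposed alternative route---identifying the kernels of $\chi$ and $\chi'$ with norm groups and chasing through the explicit description of $\Xi$ in \S\ref{descXi}---would require proving the same compatibility of $\xi$ with norm groups (which is again the content of \cite{Del84}*{3.6.1}), so it does not sidestep the issue. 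Finally, there is a small further point worth registering: to apply the cited compatibility you must use the specific isomorphism $\xi$ that Deligne constructs from $(\phi,\eta)$, not an ad hoc one built from ``$\pi_K \mapsto \pi_{K'}$ plus $\phi$ on units''; these agree here, but that identification deserves a sentence rather than being left implicit.
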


\begin{proof}
As explained in \cite{Del84}*{1.2}, the isomorphisms $\phi$ and $\eta$ induce an isomorphism
\[
 \xi \colon K^{\times}/U_K^e \isomto {K}'^{\times}/U_{K'}^e
\]
and, since $e \ge c(\chi)$, also a compatible isomorphism
\[
\ov{\xi}\colon K^\times/U_K^{c(\chi)} \isomto K'^{\times}/U_{K'}^{c(\chi)}.
\]
Moreover, the images of $\Delta$ and $\Delta'$ in 
\[
\fm_{K'}^{v_K(\Delta)}/\fm_{K'}^{ v_K(\Delta) + c(\chi) } \subset \cO_{K'}/\fm_{K'}^{v_K(\Delta) + c(\chi)}
\]
agree, so $\ov{\xi}(\ov{\Delta}) = \ov{\Delta\pr}$.
Thus, to get the desired $\chi(\Delta) = \chi'(\Delta')$, it remains to use local class field theory to identify $\chi$ (resp.,~$\chi'$) with a character of $K^{\times}/U_K^{c(\chi)}$ (resp.,~of ${K}'^{\times}/U_{K'}^{c(\chi)}$) and to apply \cite{Del84}*{3.6.1} to get $\chi = \chi' \circ \ov{\xi}$.
\end{proof}

\begin{lem}\label{twrel}
Let $E'_{\chi'}$ denote the quadratic twist of $E'$ by $\chi'$.
\benum
\item \lab{twrel-a}
The minimal discriminants $\Delta_{\chi, \min}$ and 
$\Delta'_{\chi', \min}$ of 
$E_{\chi}$ and $E'_{\chi'}$ satisfy
\[
v_K (\Delta_{\chi, \min}) = v_{K'} (\Delta'_{\chi', \min}).
\]

\item \lab{twrel-b}
The N\'{e}ron models 
$\cE_{\chi} \ra \Spec \cO_K$ and $\cE'_{\chi'} \ra \Spec \cO_{K'}$ 
of $E_{\chi}$ and $E'_{\chi'}$ satisfy 
\[
 \qqq \cE_{\chi,\bF}^0 \simeq \cE^{\prime 0}_{\chi',\bF} \qq \text{ and } 
 \qq  (\cE_{\chi,\bF}/\cE^0_{\chi,\bF}) (\bF) \simeq 
 (\cE'_{\chi',\bF}/\cE^{\prime 0}_{\chi',\bF}) (\bF). 
\]

\item \lab{twrel-c}
We have\footnote{As explained in \cite{KT82}*{Prop.~7.3}, the quotient $E(K)/\Norm_{K_{\chi}/K} E (K_{\chi})$ is finite even when $\Char K = 2$.}
\[
\q\dim_{\bF_2} 
 (E(K)/\Norm_{K_{\chi}/K} E (K_{\chi})) = \dim_{\bF_2} 
 (E'(K')/\Norm_{K'_{\chi'}/K'} E' (K'_{\chi'})).
\]
\end{enumerate}
\end{lem}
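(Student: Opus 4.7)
The plan is to deduce (a) and (b) by applying \Cref{existe} directly to the Weierstrass equation for $E_{\chi}$, and to deduce (c) by invoking the explicit formulas of Kramer--Tunnell expressing the norm index in terms of local invariants that are preserved under the deformation of \S\ref{Kprime}--\S\ref{chiprime}.

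For (a) and (b), the Weierstrass equation for $E_{\chi}$ given in \S\ref{pneq2} or \S\ref{peq2} is built from the $a_i$ together with auxiliary data: either $d \in \cO_K$ with $K_\chi = K(\sqrt{d})$ when $p \neq 2$, or a pair $(\gamma, r)$ with $K_\chi = K(\theta)$, $\theta^2 - \theta = \gamma$, when $p = 2$. To apply \Cref{existe} to $E_{\chi}$, I first construct matching auxiliary data on the $K'$-side so that the equation for $E'_{\chi'}$ constructed from the formulas of \S\ref{pneq2} or \S\ref{peq2} reduces mod $\fm_{K'}^e$, via $\phi$, to the equation for $E_\chi$. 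If $p \neq 2$, lifting $\phi(\ov d)$ to some $d' \in \cO_{K'}$ yields, by $v_K(d) \le 1$, the choice $e \ge v_K(\mathfrak{d}_{K_\chi/K})$, and \Cref{disccomp}, an element of matching valuation; the explicit description of $\Xi$ in \S\ref{descXi} applied to the Eisenstein polynomial $x^2 - d$ in the ramified case, and \cite{Del84}*{2.1.1 (ii)} in the unramified case, identifies $K'(\sqrt{d'})$ with $K'_{\chi'}$. If $p = 2$, the chosen identifications $\phi(\ov t) = \ov T$, $\eta(\ov t) = \ov T$ let us take $r' = r$ and lift $\phi(\ov{\pi_K^{2r}\gamma}) \in \cO_{K'}/\fm_{K'}^e$ to some $\pi_{K'}^{2r} \gamma' \in \cO_{K'}$; the minimal polynomial $X^2 - \pi_K^r X + \pi_K^{2r}\gamma$ of $\pi_K^r\theta$ is Eisenstein or defines an unramified quadratic extension (depending on the parity of $v_K(\gamma)$), so \S\ref{descXi} again identifies $K'(\theta')$ with $K'_{\chi'}$. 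In either case, substituting $d'$ or $(\gamma', r')$ into the formulas of \S\ref{pneq2} or \S\ref{peq2} produces a Weierstrass equation for $E'_{\chi'}$ whose coefficients reduce mod $\fm_{K'}^e$ to those of $E_\chi$ via $\phi$. Since $e \ge e_{K, E_\chi}$ by the choice in \S\ref{Kprime}, \Cref{existe}~\ref{loop-2}--\ref{loop-3} applied to $E_\chi$ then yields (a) and (b).

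For (c), the explicit formulas of Kramer--Tunnell in \cite{KT82}*{Prop.~7.3} express $\dim_{\bF_2}(E(K)/\Norm_{K_\chi/K} E (K_\chi))$ (whose finiteness is also asserted there) in terms of the reduction types, minimal discriminants, Tamagawa numbers, and component-group data of $E$ over $K$ and over $K_\chi$, together with the ramification data of $\chi$. Each of these invariants is preserved by our deformation: the Néron model data over $K$ (resp.,~over $K_\chi$) are matched to those over $K'$ (resp.,~over $K'_{\chi'} = \Xi(K_\chi)$) by \Cref{existe} applied to $E$ and to $E$ viewed over $K_\chi$---using for the latter the induced isomorphism of triples \eqref{phiext} and $e \ge e_{K_\chi, E}$---and the ramification data of $\chi$ is preserved by the correspondence between $\chi$ and $\chi'$ recorded in \S\ref{chiprime}, via \cite{Del84}*{2.1.1 (iii)}. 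The main obstacle is the matching of Weierstrass equations for $E_\chi$ and $E'_{\chi'}$ in the wild case $p = 2$: tracking the denominators appearing in the formulas of \S\ref{peq2} and verifying that the Artin--Schreier parameter $\gamma \notin \cO_K$ transports correctly requires the normalization of $\gamma$ and of $r$ imposed in \S\ref{peq2}, together with the compatibility of $\phi$ with the chosen uniformizers.
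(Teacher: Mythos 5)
Your proposal follows the same route as the paper: match the auxiliary data ($d$ or $(\gamma, r)$) across the two fields, feed the Kramer--Tunnell Weierstrass equation for the twist into Proposition~\ref{existe} to get \ref{twrel-a} and \ref{twrel-b}, and then appeal to the explicit Kramer--Tunnell norm-index formula for \ref{twrel-c}. A few imprecisions worth flagging.

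First, when you lift $\phi(\ov d)$ (or $\phi(\ov{\pi_K^{2r}\gamma})$) rather than $\eta(\ov d)$ (or $\eta(\ov{\pi_K}^{2r}\ov\gamma)$) in the ramified cases, the constraint is one power of $\fm_{K'}$ weaker than what \S\ref{descXi} actually prescribes for the coefficients of the Eisenstein polynomial of $\Xi(K_\chi)$; the paper lifts via $\eta$ in the ramified case precisely so that \S\ref{descXi} applies verbatim. Your choice can be repaired by a short extra argument (e.g., for $p\neq 2$ observing that two uniformizers agreeing mod $\fm_{K'}^{e}$ differ by a square in $1 + \fm_{K'}$), but as written the appeal to \S\ref{descXi} is not quite licensed.

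Second, what you call ``the main obstacle'' in the $p=2$ case is acknowledged but not resolved. The substantive point is that the characteristic-$0$ version of the formula \cite{KT82}*{(7.4.4)} for $E'_{\chi'}$ carries extra factors $(1-4\gamma')$ and an extra cross term $2\gamma'a_1'a_3'$ that are absent in characteristic~$2$; one must check that these are congruent to $1$ (resp.~$0$) modulo $\fm_{K'}^e$. This works because $v_{K'}(2)=e$ and $e \ge v_K(\mathfrak{d}_{K_\chi/K}) = 2r$, so $2\gamma' \in \cO_{K'}$ and hence $4\gamma' \in 2\cO_{K'} \subset \fm_{K'}^e$. Without this verification, the claim that the two Weierstrass equations are congruent via $\phi$ is not justified.

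Third, for \ref{twrel-c} the citation should be to \cite{KT82}*{Thm.~7.6} (with the stretching factors pinned down via \cite{KT82}*{Lemma~7.1~(1)} in terms of the valuations of the discriminant and the minimal discriminant), not to \cite{KT82}*{Prop.~7.3}, which is only about finiteness of the norm index. The list of invariants you enumerate is roughly right, but the precise inputs are those controlled by Proposition~\ref{existe}~\ref{loop-1}--\ref{loop-3} applied to $E$ over $K$ and over $K_\chi$, together with the congruence of the twisted Weierstrass equations established in \ref{twrel-a}--\ref{twrel-b}.
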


\begin{proof}
In proving \ref{twrel-a} and \ref{twrel-b}, we treat the cases $p \neq 2$ and $p = 2$ separately. The $p \neq 2$ case, which we consider first, is simpler.

If $p \neq 2$, then let $d \in \cO_K$ with $v_K(d) \le 1$ be the element fixed in \S\ref{pneq2}, so that
\[
K_\chi = K(\sqrt{d}).
\]
 If $v_K(d) = 0$, choose a $d' \in \cO_{K'}$ lifting $\phi(\ov{d})$; if $v_K(d) = 1$, choose a $d' \in \fm_{K'}$ lifting $\eta(\ov{d})$. Then $v_{K'}(d') \le 1$ and, by \S\ref{descXi},
\[
K'_{\chi'} \simeq K'(\sqrt{d'}).
\]
Moreover, by \cite{KT82}*{(7.4.2) on p.~331}, 
\[
\textstyle y^2 = x^3 + \f{1}{4}d'(a_1^{\prime 2} + 4a'_2)x^2 + \f{1}{2} d^{\prime 2}(a'_1a'_3 + 2a'_4)x + \f{1}{4} d^{\prime 3}(a_3^{\prime 2} + 4a'_6)
\]
is an integral Weierstrass equation for $E'_{\chi'}$. By construction, this equation is congruent via $\phi$ to the one for $E_\chi$ fixed in \S\ref{pneq2}. Thus, if $p \neq 2$, then \ref{twrel-a} and \ref{twrel-b}  result from \Cref{existe}~\ref{loop-2} and \ref{loop-3}.

If $p = 2$, then let $\gamma \in K$ and $\pi_K \in \cO_K$ be the element and the uniformizer fixed in \S\ref{peq2}, so that
\[
K_\chi \simeq K[x]/(x^2 - x + \gamma)
\]
 and either $v_K(\gamma) = 0$ or $v_K(\gamma)$ is both negative and odd. As in \S\ref{peq2}, let $r \in \bZ_{\ge 0}$ be minimal such that $\pi_K^{2r}\gamma \in \cO_K$. Then
\[
K_{\chi} \simeq K[x]/(x^2 - \pi_K^r x +\pi_K^{2r}\gamma),
\]
 and the polynomial $x^2 - \pi_K^r x +\pi_K^{2r}\gamma$ cuts out the unramified quadratic extension if $r = 0$ and is Eisenstein if $r > 0$. Choose a $\pi_{K'} \in \fm_{K'}$ lifting $\eta(\ov{\pi_K})$. If $r = 0$, choose a $\gamma' \in \cO_{K'}$ lifting $\phi(\ov{\gamma})$; if $r > 0$, choose a $\gamma' \in K'$ in such a way that $\pi^{2r}_{K'} \gamma' \in \fm_{K'}$ lifts $\eta(\ov{\pi_K}^{2r}\ov{\gamma})$. By \S\ref{descXi}, 
\[
K'_{\chi'} \simeq K'[x]/(x^2 -\pi_{K'}^{r} x +\pi_{K'}^{2r} \gamma')
\]
and $r \in \bZ_{\ge 0}$ is minimal such that $\pi_{K'}^{2r} \gamma' \in \cO_{K'}$.  By \Cref{disccomp} (or by reasoning as in \S\ref{peq2}), the discriminant ideal of $K'_{\chi'}/K'$ is $(\pi_{K'}^{2r}) \subset \cO_{K'}$. By construction, $v_{K'}(2) = e$ and $e \ge v_K(\mathfrak{d}_{K_\chi/K}) = 2r$, so $4\gamma' \in 2\cO_{K'}$. Therefore, by \cite{KT82}*{(7.4.4) on p.~331}, 
\[\ba
 y^2 &+ \pi_{K'}^{r} (1 - 4\gamma') a_1' xy + 
 \pi_{K'}^{3r} (1 - 4\gamma')^{2} a_3' y 
 = \\ & x^3 + \pi_{K'}^{2r}(1 - 4\gamma')(a_2' +\gamma' a_1^{\prime 2})x^2 
 + \pi_{K'}^{4r}(1 - 4\gamma')^2 (a_4' +2 \gamma' a_1' a_3' )x 
 + \pi_{K'}^{6r}(1 - 4\gamma')^3 (a_6' +\gamma' a_3^{\prime 2}) 
\ea\]
is an integral Weierstrass equation for $E'_{\chi'}$. By construction, this equation is congruent via $\phi$ to the one for $E_\chi$ fixed in \S\ref{peq2}. Thus, \ref{twrel-a} and \ref{twrel-b} again follow from \Cref{existe} \ref{loop-2} and \ref{loop-3}.

Due to \cite{KT82}*{Thm.~7.6}, \ref{twrel-c} follows from the proof of \ref{twrel-a} and \ref{twrel-b} given above and from \Cref{existe}~\ref{loop-1}--\ref{loop-3} applied to $E$ and to $E_{K_\chi}$. Namely, it remains to note that due to \cite{KT82}*{Lemma~7.1~(1)}, the ``stretching factors'' considered in \cite{KT82}*{Thm.~7.6} are determined by the valuations of the minimal discriminant and of the discriminant of the model at hand.
\epf

\bthm \lab{main-thm}
The Kramer--Tunnell conjecture \eqref{KTconj} holds.
\ethm

\bpf
Due to \Cref{root-eq}, \Cref{chi-eq}, and \Cref{twrel}~\ref{twrel-c}, all the terms in \eqref{KTconj} for $E$ and $\chi$ match with the corresponding terms for some $E'$ and $\chi'$ in characteristic $0$. To finish the proof, recall that the characteristic $0$ case of the Kramer--Tunnell conjecture has been proved in most cases in \cite{KT82} and in all the remaining cases in \cite{DD11}*{Thm.~4.7}.
\epf

\begin{bibdiv}
\begin{biblist}
% \bibselect{big}

\bibselect{bibliography}

\end{biblist}
\end{bibdiv}

\end{document}